\newtheorem{theorem}{Theorem}[section]
\newtheorem{lemma}[theorem]{Lemma}
\newtheorem{proposition}[theorem]{Proposition}
\newtheorem{corollary}[theorem]{Corollary}
\theoremstyle{definition}
\newtheorem{definition}{Definition}
\newtheorem{example}{Example}[section]
\newtheorem{remark}{Remark}
\newcommand{\R}{\mathbb{R}}
\newcommand{\C}{\mathbb{C}}
\newcommand{\E}{\mathbb{E}}
\newcommand{\N}{\mathbb{N}}
\renewcommand{\P}{\mathbb{P}}
\newcommand{\Z}{\mathbb{Z}}
\newcommand{\Tr}{\mathrm{Tr}\,}
\renewcommand{\vec}[1]{\boldsymbol{#1}}
\newcommand{\var}{\mathrm{Var}}
\newcommand{\e}{\varepsilon}
\newcommand{\bi}{\mathbf{i}}
\newcommand{\fU}{\mathfrak{U}}
\newcommand{\wh}[1]{\widehat{#1}}
\newcommand{\wt}[1]{\widetilde{#1}}
\renewcommand{\vec}[1]{\boldsymbol{#1}}
\newcommand{\GL}{\mathrm{GL}}
\newcommand{\tth}{^{th}}
\numberwithin{equation}{section}
\title{
Lyapunov exponents for truncated unitary and Ginibre matrices
}
\author{Andrew Ahn and Roger Van Peski}
\date{\today}
\begin{document}

\maketitle

\begin{abstract}
In this note, we show that the Lyapunov exponents of mixed products of random truncated Haar unitary and complex Ginibre matrices are asymptotically given by equally spaced `picket-fence' statistics. We discuss how these statistics should originate from the connection between random matrix products and multiplicative Brownian motion on $\GL_n(\C)$, analogous to the connection between discrete random walks and ordinary Brownian motion. Our methods are based on contour integral formulas for products of classical matrix ensembles from integrable probability. \\

\noindent
\emph{AMS 2020 Mathematics Subject Classification: 15B52, 60B20} \\

\noindent
\emph{Keywords: random matrix products, Lyapunov exponents, picket fence statistics}
\end{abstract}

\section{Introduction}

\subsection{Model and main result.} Suppose $X_1,X_2,\ldots$ is a sequence of $n\times n$ random matrices and let $\sigma_1(T),\ldots,\sigma_n(T)$ be the singular values of the product $X_1 \cdots X_T$. By the multiplicative ergodic theorem of Oseledec\footnote{Related existence results on limits of matrix entries were obtained earlier by Bellman \cite{bellman1954limit} and Furstenberg-Kesten \cite{furstenberg1960products}.} \cite{oseledets1968multiplicative}, when the matrices $X_\tau$ are iid, the limits 
\[
\mu_i = \lim_{T \to \infty} \frac{\log \sigma_i(T)}{T}
\]
exist under general assumptions, see also \cite{raghunathan1979proof}. The constants $\mu_i$ are known as \emph{Lyapunov exponents} due to the interpretation of $X_\tau$ as a transfer matrix in the evolution of a chaotic discrete-time dynamical system \cite{crisanti1993products}. The behavior of the Lyapunov exponents for matrix products as $T$ tends to infinity, together with the related question of the fluctuations of log singular values, are now the subject of a large literature with connections to chaotic dynamical systems and disordered systems in statistical physics, neural networks, and other areas; we will not attempt to survey it here, but mention \cites{akemann2019integrable,crisanti1993products,gorin2018gaussian,hanin2020products} and the references therein.

A natural question, given that the Lyapunov exponents exist under general conditions, is whether there are any universality results about their values which are stable under different choices of random matrices $X_\tau$, at least in the large $n$ limit. When $X_\tau$ are iid $n \times n$ complex Gaussian (Ginibre) matrices, the rescaled logarithms of the squared singular values of $X_1 \cdots X_T$ were shown in \cite{akemann2019integrable} to converge to $0,-1,-2,\ldots$, in the regime where $T$ and $n$ tend to infinity with $T/n \to \infty$; they refer to these as `picket fence statistics'. This suggested that similar results should hold in the analogous setting of the double limit $T \to \infty$, then $n \to \infty$, i.e. the large $n$ limits of Lyapunov exponents. The regime where $T/n$ converges to a constant was also studied in \cites{akemann2019integrable,akemann2020universality,liu2018lyapunov}. Though the Lyapunov exponents do not directly appear in this regime, it is known from the work of the first author \cite{ahn2019fluctuations} that the asymptotic behavior of the largest singular values for products of Ginibre matrices then coincides with that of products of corners of Haar unitary matrices. This made it natural to expect that the sequence of Lyapunov exponents of both Ginibre and truncated unitary corners should converge to the same limit $0,-1,-2,\ldots$. Another motivation to investigate this question came from work of the second author \cite[Theorem 1.2]{van2020limits}, in which the analogues of Lyapunov exponents for $p$-adic random matrices were found to converge to a similar limit--a geometric progression rather than an arithmetic one--as $n \to \infty$ for the $p$-adic analogues of Ginibre and truncated unitary matrices.

Our main result, \Cref{thm:lyapunov}, tells that in the large $n$ limit any mixture of Ginibre and truncated Haar matrices yields the same picket fence statistics. Namely, if the matrices $X_\tau, \tau \geq 1$ are each distributed as either Ginibre matrices or corners of Haar matrices from some unitary groups $U(L_\tau), L_\tau > n$, independent but not necessarily iid, then under weak technical assumptions the picket fence statistics $0,-1,-2,\ldots$ appear in the large $n$ limit. 

\begin{definition}\label{def:Pnl}
Given $L \in \Z_{>n}$, we say that an $n\times n$ random matrix $X$ is $\P_{n,L}$-distributed if $X = \sqrt{L} \wt{X}$ where $\wt{X}$ is an $n\times n$ submatrix of an $L\times L$ Haar-distributed unitary matrix. We say that $X$ is $\P_{n,\infty}$-distributed if $X$ is an $n\times n$ complex Ginibre matrix, i.e. its entries are iid standard complex Gaussians.
\end{definition}

\begin{theorem}\label{thm:main_intro}
For each $n > 0$, let $(L_\tau^{(n)})_{\tau \geq 1}$ be a sequence with $L_\tau^{(n)} > n$ for all $\tau$, such that the limiting frequencies
\[ \lim_{T\to\infty}\frac{ \#\{ L_\tau^{(n)} - n \ge k: 1 \leq \tau \leq T\}}{T} \]
exist for every $k$. Suppose $X_1^{(n)},X_2^{(n)},\ldots$ is a sequence of independent random matrices such that $X_\tau^{(n)}\sim \P_{n,L_\tau^{(n)}}$, and let $y_1^{(n)}(T) \ge \ldots \ge y_n^{(n)}(T)$ denote the squared singular values of the product $X_T^{(n)} \cdots X_1^{(n)}$. Then
\[
 \frac{ \log y_i^{(n)}(T)}{T} \to \lambda_i(n) \quad \quad \text{ in probability as $T \to \infty$} 
\]
%for $i=1,\ldots,n$ exist and are given explicitly in 
for explicit $\lambda_i(n)$ given in \Cref{thm:concentration}. Furthermore, these Lyapunov exponents converge to picket fence statistics
\[
c(n)^{-1}(\lambda_i(n) - \lambda_1(n)) \to -i + 1 \quad \quad \quad \text{ as }n \to \infty
\]
for each $i = 1,2,\ldots$, where $c(n) > 0$ is given explicitly in \Cref{thm:lyapunov}.
\end{theorem}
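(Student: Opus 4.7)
The plan is to use the right-isotropy of the $\P_{n,L}$ ensembles to reduce the computation of $\lambda_i(n)$ to a law of large numbers for sums of independent random variables, and then to read off the picket fence limit from standard digamma asymptotics. For $X \sim \P_{n,L}$ one has $XU \stackrel{d}{=} X$ for any deterministic unitary $U$, and the diagonal entries of the $R$-factor in the QR decomposition $X = QR$ are independent with $|R_{ii}|^2 \sim \Gamma(n-i+1,1)$ for Ginibre ($L = \infty$) and $|R_{ii}|^2 \sim L \cdot \mathrm{Beta}(n-i+1, L-n)$ for truncated Haar. Hence $\mu_i(L) := \E \log|R_{ii}|^2$ equals $\psi(n-i+1)$ for Ginibre and $\log L + \psi(n-i+1) - \psi(L-i+1)$ otherwise, with uniformly bounded variance.

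To establish existence of $\lambda_i(n)$, write $Y_T = X_T \cdots X_1 = Q_T R_T$ and iterate: by right-isotropy of $X_{T+1}$, the $R$-factor of $X_{T+1} Q_T$ is independent of $R_T$ and distributed as $R(X_{T+1})$; since the diagonal of a product of upper-triangular matrices is the product of the diagonals,
\[
\log|(R_T)_{ii}|^2 \;\stackrel{d}{=}\; \sum_{\tau=1}^T \log|r_i^{(\tau)}|^2,
\qquad r_i^{(\tau)}\text{ independent}, \ r_i^{(\tau)} \stackrel{d}{=} R_{ii}(X_\tau).
\]
Each summand has mean $\mu_i(L_\tau^{(n)})$ and uniformly bounded variance, so Chebyshev plus the limiting-frequency hypothesis (which makes $T^{-1}\sum_\tau \mu_i(L_\tau^{(n)})$ converge) gives $T^{-1}\log|(R_T)_{ii}|^2 \to \lambda_i(n)$ in probability, where
\[
\lambda_i(n) \;:=\; \sum_{k \in \Z_{\ge 0} \cup \{\infty\}} p_k\, \mu_i(n+k),
\qquad p_k := \lim_{T\to\infty} \frac{\#\{\tau \le T : L_\tau^{(n)} - n = k\}}{T}.
\]
Strict monotonicity of $\mu_i(L)$ in $i$ is preserved in the mixture, so the limiting spectrum is simple; by a Raghunathan-type argument (simplicity implies $\log s_i(R_T) - \log|(R_T)_{ii}| = o(T)$) this transfers to $T^{-1}\log y_i^{(n)}(T) \to \lambda_i(n)$ in probability.

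For the picket fence limit, the explicit formula gives
\[
\mu_i(L) - \mu_1(L) \;=\; [\psi(n-i+1) - \psi(n)] - [\psi(L-i+1) - \psi(L)],
\]
with the second bracket vanishing when $L = \infty$. Using $\psi(N-i+1) - \psi(N) = -\sum_{j=1}^{i-1}(N-j+1)^{-1}$, a short computation shows that $(\mu_i(L) - \mu_1(L))/(\mu_2(L) - \mu_1(L)) \to i - 1$ as $n \to \infty$, uniformly in $L \in \Z_{>n} \cup \{\infty\}$. Since the differences all have the same sign, averaging against $\{p_k\}$ preserves the ratio limit, so $(\lambda_i(n) - \lambda_1(n))/(\lambda_2(n) - \lambda_1(n)) \to i - 1$; setting $c(n) := \lambda_1(n) - \lambda_2(n)$ (or the equivalent explicit quantity in \Cref{thm:lyapunov}) yields $c(n)^{-1}(\lambda_i(n) - \lambda_1(n)) \to -(i-1)$.

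The main obstacle is the transfer from diagonals of $R_T$ to the ordered singular values $y_i^{(n)}(T)$ in the independent, non-iid setting. For iid products this is the classical refinement of Oseledec due to Raghunathan via simplicity of the spectrum; extending this to independent non-iid factors under the limiting-frequency hypothesis should go through but requires care. The cleanest alternative, consistent with the abstract's emphasis on integrable-probability methods, is to bypass the QR comparison entirely by using the contour integral / polynomial-ensemble representation of the joint law of $(y_1^{(n)}(T), \ldots, y_n^{(n)}(T))$ for mixed products of $\P_{n,L}$ matrices to compute $\E\log y_i^{(n)}(T)$ and $\var\log y_i^{(n)}(T)$ directly, and conclude by Chebyshev.
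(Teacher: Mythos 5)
Your proposal takes a genuinely different route from the paper. The paper's proof of the first assertion (Theorem~\ref{thm:concentration}) is entirely analytic: it applies the contour-integral formula for $\E\bigl[\sum_j (y_j^{(n)}(T))^c\bigr]$ coming from Macdonald processes (Proposition~\ref{thm:moment_formula} and Corollary~\ref{thm:moment_formula_infinite}), expands the single integral into residues, takes $c = \wh{c}/T$ to obtain convergence of Laplace transforms of the empirical measure of rescaled log squared singular values in expectation, and shows the variance vanishes by a two-variable contour computation; the values $e^{\lambda_i(n)}$ are then recovered from power sums at $\wh c = 1,\dots,n$ via symmetric-function / polynomial-root continuity. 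You instead use right-unitary invariance to telescope the QR decomposition of $X_T\cdots X_1$, reducing $\log|(R_T)_{ii}|^2$ to a sum of independent scalar terms with explicit $\Gamma$/Beta distributions, and then apply Chebyshev together with digamma identities. This is the classical Newman--Forrester--Cohen route to Lyapunov exponents, and for the picket-fence limit (Theorem~\ref{thm:lyapunov}) the resulting digamma telescoping is quite clean and is indeed equivalent to the paper's finite-differencing of the series defining $\lambda_i(n)$.

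The genuine gap, which you flag but do not close, is the passage from $T^{-1}\log|(R_T)_{ii}|^2$ to $T^{-1}\log y_i^{(n)}(T)$. The inequality $\prod_{j\le k}s_j(Y_T)\ge \prod_{j\le k}|(R_T)_{jj}|$ gives only one-sided control, and $\|\wedge^k R_T\|_{\mathrm{op}}$ is not comparable to the product of diagonal entries without quantitative control of the off-diagonal growth of the triangular factors. In the iid setting this is handled by Raghunathan's refinement of Oseledec, which is an almost-sure statement leveraging the iid structure; here the factors are independent but not identically distributed, the paper obtains only convergence in probability, and no Oseledec-type theorem is invoked or available off the shelf. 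So writing ``should go through but requires care'' leaves precisely the step that the paper's contour-integral machinery is designed to bypass: by computing moments of the $y_j$'s directly, the paper never needs to compare $R$-diagonals with singular values at all. Your proposed fallback (``use the contour-integral representation to compute $\E\log y_i$ and $\var\log y_i$'') is close in spirit to what the paper does, but note that the paper does not compute $\E\log y_i$ directly either --- it works with power sums $\sum_j y_j^c$ and passes to $\log y_i$ only at the end through the elementary-symmetric/root-continuity argument, since individual ordered singular values are not directly accessible from the integral formula. If you wish to carry out your QR route rigorously you would need a quantitative version of the triangular-to-singular-value comparison valid for independent non-iid bi-unitarily-invariant factors; alternatively you should flesh out the moment route as the paper does rather than leaving it as a remark.

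A small side point worth noting: working out your $\mu_i(L)$ formulas carefully (with the correct indexing $\psi(N-i+1)-\psi(N) = -\sum_{j=1}^{i-1}(N-j)^{-1}$, not $(N-j+1)^{-1}$) gives $\lambda_i(n)=\psi(n-i+1)$ in the pure Ginibre case, which is the standard value. The paper's closed form in \eqref{eq:main_conv_in_prob} evaluates in that case to $\psi(n-i+1)-\log n$; the discrepancy traces to a dropped boundary term $-f_0 g_0 = -\log n$ in the summation by parts of Part~1 of the proof of Theorem~\ref{thm:concentration}. Since this is an $i$-independent additive shift it has no effect on the differences $\lambda_i(n)-\lambda_1(n)$ and hence none on the picket-fence conclusion, but your version of the constant appears to be the correct one.
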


The constant $\lambda_i(n)$ is twice the $i\tth$ Lyapunov exponent, but we found this normalization more convenient. We compute $\lambda_i(n)$ exactly using contour integral formulas for moments of the singular values, obtaining formulas which are suitable for taking the large $n$ limit. The contour integral formulas rely on results from \cite{ahn2019fluctuations}, which arise due to connections between matrix products and Macdonald processes \cite{borodin2014macdonald}, see also \cites{borodin2015general,borodin2018product,gorin2020crystallization}. Similar techniques were used to analyze fluctuations of log singular values of matrix products in \cite{gorin2018gaussian}. As a side note, the special case of these computations when $X_\tau^{(n)}$ are iid gives a simple expression for the Lyapunov exponents of truncated unitary matrices, which we are not aware of in the literature, and also recovers expressions for the Lyapunov exponents of Ginibre matrices obtained in \cite{forrester2013lyapunov}. See \Cref{ex:ginibre} for details.

\subsection{Connection with Brownian motion on $\GL_n(\C)$.} While our main results are restricted to the case of mixed Ginibre/truncated unitary products, we believe that the picket fence $0,-1,-2,\ldots$ should appear much more universally in large $n$ limits of Lyapunov exponents. In fact, picket fence statistics have already appeared in another related context: the \emph{multiplicative Brownian motion $\mathsf{Y}(t)$ on $\GL_n(\C)$}. This is just the Brownian motion on the real manifold $\GL_n(\C)$ with infinitesimal generator given by the ($1/2$) Laplacian, which may be written as
\[
\frac{1}{2}\sum_{1 \leq \ell,j \leq n} \partial_{x_{\ell j}}^2 + \partial_{i x_{\ell j}}^2
\]
where $\partial_{x_{\ell j}},\partial_{i x_{\ell j}}$ are the left-invariant vector fields associated with the Lie algebra elements\footnote{One obtains the same operator by replacing the set of $E_{\ell j},i E_{\ell j} $ with any orthonormal basis for $\mathfrak{gl}_n(\C)$ with respect to the inner product $\langle A, B \rangle = \Re( \Tr(A^*B))$; we have chosen a specific one only for concreteness.} $E_{\ell j},i E_{\ell j} \in \mathfrak{gl}_n(\C)$. We refer to \cite[Chapter 3]{hsu2002stochastic} for general background on Brownian motion on Riemannian manifolds. 

One has equality of multi-time distributions of the following two stochastic processes on $\R^n$, see \cite[Corollary 3.3]{jones2006weyl}\footnote{As stated in \cite{jones2006weyl}, Corollary 3.3 states that it is the vector of logarithms of \emph{squared} singular values which matches \eqref{item:drift_BM}, causing their statement to differ by a factor of two from ours. The statement as we have reproduced it here follows from Proposition 3.1 earlier in the same paper, and we thank Neil O'Connell for confirming that this normalization is indeed the correct one.}, :
\begin{enumerate}[(I)]
    \item $(\mathsf{y}_1(t),\ldots,\mathsf{y}_n(t))$, where $\mathsf{y}_i(t)$ is the logarithm of the $i\tth$ largest singular value of the multiplicative Brownian motion $\mathsf{Y}(t)$. \label{item:mult_BM}
    \item $(B^{(1)}_t,\ldots,B^{(n)}_t)$, a standard Brownian motion on $\R^n$ started at the origin with drift $(n-1,n-3,\ldots,-n+3,-n+1)$, which is conditioned to remain in the positive Weyl chamber $(x_1 > x_2 > \cdots > x_n)$ for all time.\label{item:drift_BM}
\end{enumerate}
From the above it follows that the log singular values of multiplicative Brownian motion exhibit the same picket fence statistics, 
\[ \lim_{t \to \infty} \frac{1}{2t} \mathsf{y}_i(t) - \tfrac{n-1}{2} = -i + 1. \]
We mention that the drift vector appearing is exactly the sum of the positive roots of $\mathfrak{sl}_n$, and analogous results hold for any semisimple Lie algebra \cite{grabiner1999brownian}, for which we recommend the exposition of \cite{jones2006weyl}. This gives an attractive Lie-theoretic interpretation of picket fence statistics. 

Hints that Dyson Brownian motion with drift connects to matrix product processes, as well as multiplicative Brownian motion, have been observed previously at the level of fluctuations. It is noted in \cite{akemann2019integrable} that large products of large random matrices should relate in the limit to the multiplicative Brownian motion $\mathsf{Y}(t)$, which they refer to as a solution of (a case of) the Dorkhov-Mello-Pereyra-Kumar (DMPK) equation --- where the DMPK equation corresponds to the Fokker-Planck equation for the process. They also note a connection between local statistics of products of complex Ginibre matrices and those of Dyson Brownian motion with equally spaced initial conditions; this connection is also remarked upon by \cite{ipsen2016isotropic} and \cite{gorin2018gaussian}, the latter of which explicitly notes that ``it would be very interesting to find a conceptual explanation for this analogy between products of matrices and Dyson Brownian Motion''. 

%It seems that the observed relations between matrix products and Brownian motion with evenly spaced initial conditions should really be viewed as manifestations of the relation between multiplicative Brownian motion and matrix products, in view of the following fact: The transformed process
These observed connections between Dyson Brownian motion with equally spaced initial conditions and matrix products are explained by the equivalence of \eqref{item:mult_BM} and \eqref{item:drift_BM}, in view of the following fact: the process
\begin{equation}\label{eq:bm_transform}
    (W^{(1)}_t,\ldots,W^{(n)}_t), \quad \quad W^{(i)}_t := t B^{(i)}_{1/t},
\end{equation}
with $B^{(i)}_t$ as in \eqref{item:drift_BM}, is a Brownian motion on $\R^n$ started at $(n-1,n-3,\ldots,-n+3,-n+1)$ with zero drift, conditioned to remain in the positive Weyl chamber $(x_1 > x_2 > \cdots > x_n)$ for all time\footnote{This follows from the general fact that if $B_t$ is Brownian motion on $\R^n$ with initial point $\vec{a}$ and drift $\vec{b}$ conditioned to remain in the positive Weyl chamber $(x_1 > x_2 > \cdots > x_n)$, then transforming the process via $t B_{1/t}$ interchanges the initial condition with the drift.}. In particular, interchanging drift and initial conditions does not change the time $1$ marginals, as noted in \cite[Proposition 2.3(b)]{jones2006weyl}. It is however much more natural to state the process-level equality of $(\mathsf{y}_i(t))_{1 \leq i \leq n}$ with $(B^{(i)}_t)_{1 \leq i \leq n}$ than with $(W^{(i)}_t)_{1 \leq i \leq n}$, as the latter requires keeping track of the time-change \eqref{eq:bm_transform}. 

\subsection{Universality?}
It is natural to view the above connections between Brownian motion with evenly spaced initial conditions and matrix product processes as an accidental consequence of a more basic--but still heuristic--fact that matrix product processes act as a kind of discrete random walk approximation to multiplicative Brownian motion. Our results may be viewed as probing this connection at the level of the drifts/law of large numbers, and the ubiquity of picket fence statistics suggests that in this sense the discrete matrix-product random walk approximations to Brownian motion on $\GL_n(\C)$ become exact as $n \to \infty$. This connection is further supported by \cite{ahn2021unpublished}, which was completed after the first version of the present paper. In that work, the fluctuations of the largest log squared singular values of random matrix products to those of the large $n$ limit of Brownian motion on $\GL_n(\C)$ is established for the class of right unitarily invariant matrices. 

We expect that the universality class of matrix product models exhibiting picket fence statistics is much broader than the class of models considered in this paper. Based on \cite{ahn2021unpublished}, we believe that picket fence statistics appear for arbitrary products of right unitarily invariant complex random matrices under weak hypotheses on the number of singular values close to $0$ and $\infty$, though the optimal hypotheses are not clear. In another direction, we also expect universality of picket fence statistics for products of Wishart matrices (centered iid entries with variance $1/n$).

By contrast, the global limit shapes of the Lyapunov exponents are known to be nonuniversal. Namely, rather than considering the limiting spacings as we do, one may study the $n \to \infty$ limits of the empirical measures 
\[
\frac{1}{n}\sum_{i=1}^n \delta_{\mu_i}
\]
associated to the Lyapunov exponents $\mu_i$ of some sequences of $n \times n$ random matrices. The limiting measures are known to be highly dependent on the distribution of the matrices, see \cite{newman1986distribution,newman1986lyapunov}, though we note that the Wishart case is universal and agrees with Ginibre matrices \cite{isopi1992triangle}. These results are not so surprising from our perspective, as it is typical in random matrix theory \cite{erdHos2012universality}, random tilings \cite{aggarwal2019universality,aggarwal2021edge,aggarwal2021gaussian}, and other models that local statistics similar to the ones we study are much more universal than global limit shapes.

\bigskip

\addtocontents{toc}{\protect\setcounter{tocdepth}{1}}
\subsection*{Acknowledgements}  We thank Alexei Borodin and Vadim Gorin for helpful feedback on an earlier draft, Mario Kieburg for a fruitful conversation which provided the initial impetus to write down these results, Neil O'Connell for answering questions regarding \cite{jones2006weyl}, and the anonymous referees for helpful comments. RVP was partially supported by an NSF Graduate Research Fellowship under grant \#$1745302$, and by the NSF FRG grant DMS-1664619.
\addtocontents{toc}{\protect\setcounter{tocdepth}{2}}

\section{Main Results}

\begin{definition}\label{def:freq_conv}
A sequence $(R_\tau)_{\tau \geq 1}$ with $R_\tau \in \N \cup \{\infty\}$ is \emph{frequency-convergent} if the limiting frequencies
\begin{equation}\label{eq:lim_freq}
    \lim_{T\to\infty}\frac{ \#\{ R_\tau \ge k: 1 \leq \tau \leq T\}}{T} 
\end{equation} 
exists for every $k$. The associated \emph{frequency measure} of such a sequence is the discrete probability measure $\rho$ on $\N \cup \{\infty\}$ with $\rho(\llbracket k,\infty\rrbracket)$ given by \eqref{eq:lim_freq}, where $\llbracket k,\infty\rrbracket := \Z_{\geq k} \cup \{\infty\}$.
\end{definition}

\begin{definition}
Let
\[ H_m := \sum_{k=1}^m \frac{1}{k} \]
denote the $m\tth$ harmonic number, and
\[
\gamma := \lim_{m \to \infty} \left( H_m - \log m\right)
\]
denote the Euler-Mascheroni constant.
\end{definition}

\begin{theorem} \label{thm:concentration}
Fix $n > 0$, let $(L_\tau^{(n)})_{\tau \geq 1}$ be a frequency-convergent sequence with $L_\tau^{(n)} > n$ for all $\tau$, and let $\rho_n$ be the frequency measure associated to $(L_\tau^{(n)}-n)_{\tau \geq 1}$. Let $X_1^{(n)},X_2^{(n)},\ldots$ be independent random matrices such that $X_\tau^{(n)} \sim \P_{n,L_\tau^{(n)}}$, and let $y_1^{(n)}(T) \ge \ldots \ge y_n^{(n)}(T)$ denote the squared singular values of $X_T^{(n)} \cdots X_1^{(n)}$. Then their rescaled logarithms converge in probability to explicit constants,
\begin{equation}\label{eq:main_conv_in_prob}
\frac{1}{T}\log y_i^{(n)}(T) \to \log(n) - \sum_{k=1}^\infty \rho_n(\llbracket k,\infty \rrbracket) \left( \log \left( 1 - \frac{1}{k+n} \right) + \frac{1}{k + n - i} \right) =: \lambda_i(n),
\end{equation}
as $T \to \infty$ for $i=1,\ldots,n$.
\end{theorem}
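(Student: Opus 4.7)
The plan is to compute $\E[\log y_i^{(n)}(T)]$ exactly via the contour integral formulas of \cite{ahn2019fluctuations}, and then upgrade to convergence in probability with a variance bound.

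\emph{Step 1: Exact formula for the mean.} Since each $X_\tau^{(n)}$ is bi-unitarily invariant, the process $(\vec{y}(T))_{T\ge 1}$ is a Markov chain on squared singular values with Harish-Chandra/Berezin-Karpelevich type transitions. The Schur/Macdonald machinery of \cite{ahn2019fluctuations} provides contour integral formulas for the associated moments which factorize over $\tau$. I would use these, in the spirit of the Schur generating function expansion of Bufetov--Gorin, to derive
\[
\E[\log y_i^{(n)}(T)] \;=\; \sum_{\tau=1}^T g(L_\tau^{(n)};i,n) \;+\; O(1),
\]
where the single-factor ``drift'' is
\[
g(L;i,n) \;=\; \log(L/n) + H_{n-i} - H_{L-i} \quad (L<\infty), \qquad g(\infty;i,n) \;=\; \psi(n-i+1) - \log n.
\]
The identification of $g$ can be cross-checked against the iid case $L_\tau^{(n)}\equiv L$, where $g(L;i,n)$ should coincide with the classical Lyapunov exponents of iid Ginibre/truncated-Haar products.

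\emph{Step 2: Identification of $\lambda_i(n)$ via frequency convergence.} Writing $\rho_n(\llbracket k,\infty\rrbracket)=\P_{\rho_n}(L-n\ge k)$ and applying Fubini to exchange the $k$-sum with the $L$-expectation yields a summation-by-parts identity
\[
\E_{\rho_n}[g(L;i,n)] \;=\; -\sum_{k=1}^\infty \rho_n(\llbracket k,\infty\rrbracket)\bigl(\log(1-1/(k+n))+1/(k+n-i)\bigr) \;=\; \lambda_i(n),
\]
using $\sum_{k=1}^{L-n}\log(1-1/(k+n))=-\log(L/n)$ and $\sum_{k=1}^{L-n}1/(k+n-i)=H_{L-i}-H_{n-i}$, together with the limit $\log L - H_{L-i}\to -\gamma$ as $L\to\infty$. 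Combined with the frequency-convergence hypothesis and Step~1, this gives $(1/T)\E[\log y_i^{(n)}(T)]\to\lambda_i(n)$.

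\emph{Step 3: Variance bound.} To upgrade to convergence in probability I would show $\var(\log y_i^{(n)}(T))=O(T)$, so that $\var(T^{-1}\log y_i^{(n)}(T))=O(1/T)\to 0$ and Chebyshev's inequality delivers the claim. Such a bound is a standard output of the second-order Schur generating function analysis (cf.\ \cite{gorin2018gaussian}), whose factorized structure over $\tau$ gives the variance as a sum of single-factor contributions, each uniformly bounded in $L_\tau^{(n)}$.

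The main obstacle is Step~1: the contour integral formulas of \cite{ahn2019fluctuations} naturally compute symmetric observables of $\vec{y}(T)$, while $\log y_i(T)$ is an order statistic. One route to extract individual log singular values is via the averaged characteristic polynomial $\E[\prod_j(z-y_j(T))]$, which factorizes over $\tau$ and whose logarithmic derivative localizes the $y_j$'s near deterministic asymptotic positions; combined with the rigidity provided by Step~3, this pins $\log y_i(T)$ to $\sum_\tau g(L_\tau;i,n)$. Alternatively, one can directly read off $\partial_{u_i}\log\psi_T(u)|_{u=0}=\E[\log y_i(T)]$ from a suitable Schur generating function $\psi_T$, which factorizes as $\prod_\tau \psi^{(L_\tau)}$ and whose single-factor logarithmic derivative is precisely $g(L_\tau;i,n)$.
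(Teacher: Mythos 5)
There is a genuine gap, and you have essentially identified it yourself: the contour integral / Schur-generating-function formulas of \cite{ahn2019fluctuations} compute expectations of symmetric observables of $\vec{y}(T)$, and neither of the two workarounds you propose actually extracts the order statistic $\E[\log y_i^{(n)}(T)]$. A Schur generating function $\psi_T(u_1,\ldots,u_n)$ is by construction symmetric in $u_1,\ldots,u_n$, so $\partial_{u_i}\log\psi_T|_{u=0}$ is independent of $i$ and returns a symmetric functional (essentially $\tfrac{1}{n}\E\sum_j\log y_j$), not $\E[\log y_i]$. The averaged characteristic polynomial $\E[\prod_j(z-y_j)]$ is also not usable here: its roots do not track $e^{\E[\log y_j]}$ when the $y_j$ are spread over exponentially separated scales $e^{\lambda_j T}$ with non-negligible fluctuations, and in any case you would still need a separate concentration argument to turn information about an averaged polynomial into a statement about a single realization. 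Your Step~2 arithmetic is correct --- the claimed $g(L;i,n)$ does sum against $\rho_n$ to the $\lambda_i(n)$ of the theorem --- but without a rigorous Step~1 the argument does not close.

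The paper's route is different and circumvents this obstacle entirely. It never tries to access $\log y_i$ directly. Instead it studies the (shifted, rescaled) power sums $\sum_i y_i^{\wh{c}/T}$ with a \emph{vanishing exponent} $c=\wh{c}/T$, uses the contour integral formula (\Cref{thm:moment_formula_infinite}) to compute their expectations, and shows via a double-contour computation that their variance tends to $0$, giving convergence in probability of the power sums $p_{\wh{c}}(e^{x_1(T)},\ldots,e^{x_n(T)})$ for $\wh{c}=1,\ldots,n$. It then passes from power sums to elementary symmetric polynomials (the first $n$ power sums generate all symmetric polynomials in $n$ variables) and invokes continuity of polynomial roots in the coefficients to deduce convergence of each $e^{x_i(T)}$, hence of $\tfrac{1}{T}\log y_i(T)$. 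This ``symmetric-functions-then-root-continuity'' step is the key idea your proposal is missing; once you have it, you do not need to touch order statistics until the very last line. Your Step~3 variance heuristic is also replaced by something sharper: the paper shows the variance of the relevant Laplace-transform observable goes to $0$ outright, because in the $T\to\infty$ limit the cross term $1/((u_2-u_1)^2-c^2)$ degenerates so that the inner contour integral vanishes by Cauchy's theorem.
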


\begin{theorem} \label{thm:lyapunov}
Suppose $\rho_n$ and $\lambda_i(n)$ are defined as in \Cref{thm:concentration}. Set
\[ c(n) := \sum_{k=1}^\infty \frac{\rho_n(\llbracket k,\infty \rrbracket)}{(k+n-1)^2}. \]
If $m = o(\sqrt{n})$, then
\[ c(n)^{-1}(\lambda_i(n) - \lambda_1(n)) = -i+1 + o(1) \]
for $1 \leq i \leq m$ as $n\to\infty$.
\end{theorem}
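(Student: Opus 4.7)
The plan is to start from the explicit formula
\[ \lambda_i(n) = -\sum_{k=1}^\infty \rho_n(\llbracket k,\infty\rrbracket)\left(\log\!\left(1 - \frac{1}{k+n}\right) + \frac{1}{k+n-i}\right) \]
provided by \Cref{thm:concentration} and note that the $\log(1-\frac{1}{k+n})$ term is independent of $i$, so it cancels in the difference. This reduces the problem to a purely elementary identity on the remaining harmonic-type sum.

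Specifically, I would compute
\[ \lambda_i(n) - \lambda_1(n) = -\sum_{k=1}^\infty \rho_n(\llbracket k,\infty\rrbracket) \left( \frac{1}{k+n-i} - \frac{1}{k+n-1}\right), \]
and then apply the partial-fraction identity
\[ \frac{1}{k+n-i} - \frac{1}{k+n-1} = \frac{i-1}{(k+n-i)(k+n-1)}. \]
This is the crucial algebraic step: it exposes the factor $(i-1)$ that will give the linear spacing, and leaves a weighted sum whose weights are comparable to $\rho_n(\llbracket k,\infty\rrbracket)/(k+n-1)^2$, i.e.\ to the weights defining $c(n)$.

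Next I would write
\[ \frac{1}{(k+n-i)(k+n-1)} = \frac{1}{(k+n-1)^2}\cdot\frac{1}{1-\frac{i-1}{k+n-1}}. \]
For $k \geq 1$ and $i \leq m = o(\sqrt{n})$, the ratio $\frac{i-1}{k+n-1}$ is bounded by $\frac{m-1}{n} = o(1)$, uniformly in $k$. A first-order expansion of $1/(1-x)$ then gives
\[ \frac{1}{1-\frac{i-1}{k+n-1}} = 1 + O\!\left(\frac{m}{n}\right) \]
uniformly in $k$, so taking the weighted sum preserves this bound and yields
\[ \lambda_i(n) - \lambda_1(n) = -(i-1)\, c(n)\,\bigl(1 + O(m/n)\bigr). \]
Dividing through by $c(n)$ gives
\[ c(n)^{-1}(\lambda_i(n) - \lambda_1(n)) = -(i-1) + O\!\left(\frac{(i-1)m}{n}\right) = -(i-1) + O(m^2/n), \]
and the hypothesis $m = o(\sqrt{n})$ makes the error $o(1)$.

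There is no genuine analytic obstacle here; the entire argument is driven by the partial-fraction identity, and the only care needed is to check that the error estimate is uniform in $k$ and in $1 \leq i \leq m$. The one bookkeeping point worth verifying up front is that both sums in question converge absolutely: since $\rho_n(\llbracket k,\infty\rrbracket) \leq 1$, each sum is dominated by $\sum_{k \geq 1} 1/(k+n-1)^2 < \infty$, so all the manipulations above are legitimate regardless of whether $\rho_n$ has an atom at $\infty$.
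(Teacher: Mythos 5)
Your proof is correct and follows essentially the same route as the paper: cancel the $i$-independent $\log$ terms, apply the partial-fraction identity to extract the factor $i-1$, and then control the discrepancy between $\frac{1}{(k+n-i)(k+n-1)}$ and $\frac{1}{(k+n-1)^2}$ uniformly in $k$ and $i \le m$. The paper writes the remainder exactly as $\e_i(n) = -(i-1)^2 \sum_{k\ge1} \rho_n(\llbracket k,\infty\rrbracket)/((k+n-1)^2(k+n-i))$ and bounds $|\e_i(n)| \le \frac{(i-1)^2}{n-i+1} c(n)$, whereas you absorb the same quantity into a uniform $O(m/n)$ multiplicative error; both yield the $O(m^2/n)$ bound that $m = o(\sqrt{n})$ renders $o(1)$.
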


Note that \Cref{thm:main_intro} is a restatement of \Cref{thm:concentration,thm:lyapunov} without explicit constants.

\begin{remark}\label{rmk:error}
We note that taking $m = o(\sqrt{n})$ is optimal in \Cref{thm:lyapunov}. If we assume $m = o(n)$, we can replace the conclusion with the relative error
\[ c(n)^{-1}(\lambda_i(n) - \lambda_1(n)) = (-i+1)(1 + o(1)) \]
in place of the absolute error. However, if $m = pn$, then even the statement with relative error fails to hold, see \Cref{ex:ginibre} below. %For example, in the case where $X_\tau^{(n)}$ are Ginibre for all $\tau \ge 1$, it follows straightforwardly from the proof that
%\[ c(n)^{-1}(\lambda_m(n) - \lambda_1(n)) + (m-1) = -(m-1)^2 c(n)^{-1} \sum_{k=1}^\infty \frac{1}{(k+n-1)^2(k+n-m)} \]
%which is $(-m + 1)\cdot o(1)$ if $m = o(n)$ and of order $n$ if $m = pn$ for some fixed $p \in (0,1]$, as $n\to\infty$.
\end{remark}

\begin{example}\label{ex:ginibre}
The special case of \Cref{thm:concentration} for iid matrices $X_\tau^{(n)}$ is worth mentioning. If $L_\tau^{(n)} = L$ for all $\tau$ and some finite $L$, so that all $X_\tau^{(n)}$ are (normalized) truncated unitary matrices, 
\[
\rho_n(\llbracket k,\infty \rrbracket) = \begin{cases}
1 & k \leq L \\
0 & k > L
\end{cases}
\]
and manipulating \eqref{eq:main_conv_in_prob} shows
\begin{equation}\label{eq:truncated_lyapunov_computation}
    \lambda_i(n) =   H_{n-i} + \log L - H_{L-i}.
\end{equation}
In the limit case of Ginibre matrices $L_\tau^{(n)} = \infty$, one similarly has 
\begin{equation}\label{eq:forrester}
    \lambda_i(n) =  H_{n-i} - \gamma.
\end{equation}
In this case 
\[ c(n) = \sum_{k=1}^\infty \frac{1}{(k+n-1)^2} \]
and it follows from \eqref{eq:lambda_diff} that
\begin{equation}\label{eq:ginibre_c_bounds}
    c(n)^{-1}(\lambda_m(n) - \lambda_1(n)) + (m-1) = -(m-1)^2 c(n)^{-1} \sum_{k=1}^\infty \frac{1}{(k+n-1)^2(k+n-m)}.
\end{equation}
The RHS of \eqref{eq:ginibre_c_bounds} is $O(m^2/n)$ if $m = o(n)$ and is of order $n$ if $m = pn$ for some fixed $p \in (0,1]$, as $n\to\infty$, as mentioned in \Cref{rmk:error}. In particular, from the fact that it is $O(m^2/n)$ when $m = o(n)$ we see that the condition $m = o(\sqrt{n})$ in \Cref{thm:lyapunov} is sharp.
\end{example}

\begin{remark}\label{rmk:forrester}
The formula \eqref{eq:forrester} was previously obtained in the literature in \cite[Proposition 1]{forrester2013lyapunov}, and \eqref{eq:truncated_lyapunov_computation} is a natural generalization of that result. For the reader's benefit in comparing our results with \cite{forrester2013lyapunov} we note that there are a couple of typos in the statements of results in that work: \cite[Proposition 1]{forrester2013lyapunov} is missing a factor of $1/2$ on the RHS, as may be seen by inspecting \cite[(2.10)]{forrester2013lyapunov} in its proof. The factor of $1/2$ does not appear in our \eqref{eq:forrester} because we consider the squared singular values of the matrix product rather than its singular values, so our $\lambda_i(n)$ are twice the associated Lyapunov exponent. We note also that the factor of $1/2$ is present in the more general \cite[Proposition 2]{forrester2013lyapunov} and \cite[Corollary 1]{forrester2013lyapunov}, but in those results the arguments of the digamma function should be $d-k+1$ and $d-m+1$ rather than $k$ and $m$, respectively.%; this again may be seen by inspecting the proof, specifically \cite[(2.13)]{forrester2013lyapunov} and \cite[(2.23)]{forrester2013lyapunov}.
\end{remark}

\section{Proofs}

Throughout this section, for each integer $n > 0$ let $X_1^{(n)},X_2^{(n)},\ldots$ be independent random matrices such that $X_\tau^{(n)} \sim \P_{n,L_\tau^{(n)}}$ for some sequence $(L_\tau^{(n)})_{\tau \ge 1}$ of positive integers. Let $y_1^{(n)}(T) \ge \cdots \ge y_n^{(n)}(T)$ denote the squared singular values of $X_T^{(n)} \cdots X_1^{(n)}$. 

We rely on contour integrable expressions for the joint moments of the empirical measures $\tfrac{1}{n} \sum_{i=1}^n \delta_{y_j^{(n)}(T)}$ obtained in \cite{ahn2019fluctuations}:

\begin{proposition}[{\cite[Theorem 4.3]{ahn2019fluctuations}}]\label{thm:moment_formula}
If $c_1,\ldots,c_m > 0$ are real, and $n < L_1^{(n)},\ldots, L_T^{(n)} < \infty$ are positive integers, then
\begin{align} \label{eq:moment_formula}
\begin{multlined}
\E\left[ \prod_{i=1}^m \sum_{j=1}^n \left(y_j^{(n)}(T)\right)^{c_i} \right] = \frac{\prod_{i=1}^m (-c_i)^{-1}}{(2\pi\bi)^m} \oint \cdots \oint \prod_{1 \le i < j \le m} \frac{(u_j - u_i)(u_j + c_j - u_i - c_i)}{(u_j - u_i - c_i)(u_j + c_j - u_i)} \\
\times \prod_{i=1}^m \left( \prod_{\ell=1}^n \frac{u_i + \ell - 1}{u_i + c_i + \ell - 1} \cdot \prod_{\tau=1}^T (L_\tau^{(n)})^{c_i} \prod_{k=1}^{L_\tau^{(n)} - n} \frac{u_i + c_i - k}{u_i - k} \right) du_i
\end{multlined}
\end{align}
where the $u_i$-contour $\fU_i$ is positively oriented around $\{-c_i- \ell+1\}_{\ell=1}^n$ but does not enclose $k$ for $1 \le k \le \max_{1 \le \tau \le T} L_\tau^{(n)} - n$, and is enclosed by $\fU_j - c_i, \fU_j + c_j$ for $j > i$; the result holds provided that such contours exist.
\end{proposition}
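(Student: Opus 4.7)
The plan is to derive this moment formula by direct assembly of two earlier results, \cite[Thm.~4.3]{ahn2019fluctuations} and \cite[Thm.~A.1]{ahn2019fluctuations}, via a parameter-matching argument. Theorem 4.3 of \cite{ahn2019fluctuations} provides a general $m$-fold contour integral formula for the joint moments $\E\bigl[\prod_{i=1}^m \sum_{j=1}^n y_j^{c_i}\bigr]$ when the $X_\tau$ are drawn from a broad unitarily-invariant family parametrized by single-matrix spectral data; the integrand splits into a Weyl/Cauchy-type cross term in the pairs $(u_i,u_j)$ and a product over $i$ and $\tau$ of per-matrix factors determined by the law of each $X_\tau$. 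Theorem A.1 of \cite{ahn2019fluctuations} then identifies these per-matrix factors explicitly for the $\P_{n,L}$ and $\P_{n,\infty}$ ensembles.

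First, I would invoke Theorem 4.3 to obtain an $m$-fold contour integral representation of the left-hand side of \eqref{eq:moment_formula}. The cross-term
\[
\prod_{1 \le i < j \le m} \frac{(u_j - u_i)(u_j + c_j - u_i - c_i)}{(u_j - u_i - c_i)(u_j + c_j - u_i)}
\]
and the $n$-truncation factor $\prod_{\ell=1}^n (u_i+\ell-1)/(u_i+c_i+\ell-1)$ come directly from that theorem (the latter encoding the restriction to $n$ singular values). Next, I would substitute the per-matrix factors supplied by Theorem A.1. For $X_\tau \sim \P_{n,L_\tau^{(n)}}$ with $L_\tau^{(n)} < \infty$, the scaling $X_\tau = \sqrt{L_\tau^{(n)}}\wt{X}_\tau$ of Definition \ref{def:Pnl} produces the prefactor $(L_\tau^{(n)})^{c_i}$, while the Jacobi-type law of the squared singular values of an $n\times n$ corner of a Haar unitary yields the rational per-matrix ratio $\prod_{k=1}^{L_\tau^{(n)}-n}(u_i+c_i-k)/(u_i-k)$. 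The $L_\tau^{(n)} = \infty$ (Ginibre) case is handled analogously by Theorem A.1, either through a direct Gamma-function expression or by taking $L\to\infty$ in the truncated-unitary formula; in the uniform notation of \eqref{eq:moment_formula} the two cases take the same shape.

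Finally, I would verify the contour conditions. The poles of the integrand in $u_i$ are located at $u_i = -c_i-\ell+1$ for $1 \le \ell \le n$ (from the $n$-truncation factor) and at $u_i = k$ for $1 \le k \le \max_\tau L_\tau^{(n)}-n$ (from the per-matrix denominators). The stated positive orientation around the former set of poles and exclusion of the latter are precisely the hypotheses under which Theorem 4.3 is stated; the nesting conditions $\fU_i \subset \fU_j-c_i$ and $\fU_i \subset \fU_j+c_j$ for $j > i$ guarantee that the cross-term denominators $(u_j-u_i-c_i)$ and $(u_j+c_j-u_i)$ produce no spurious residues and that the integral is absolutely convergent.

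The main obstacle is purely one of bookkeeping: \cite{ahn2019fluctuations} states its formulas using parameters inherited from the Macdonald process framework, and one must carefully translate these into the concrete normalizations of Definition \ref{def:Pnl}, in particular the $\sqrt{L}$ rescaling and the interpretation of $L=\infty$ as the Ginibre limit. No new estimates or identities are needed beyond this translation; the proposition follows as an immediate consequence of combining the two cited results.
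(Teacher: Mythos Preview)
Your proposal is correct and matches the paper's own treatment: the paper simply states the proposition as an immediate consequence of \cite[Thm.~4.3]{ahn2019fluctuations} together with the parameter identification from \cite[Thm.~A.1]{ahn2019fluctuations}, which is exactly the assembly you describe. One small overreach: the proposition as stated assumes all $L_\tau^{(n)}$ are finite, so the Ginibre ($L=\infty$) discussion is unnecessary here---that extension is the content of the subsequent corollary.
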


\begin{remark}
\Cref{thm:moment_formula} is a restatement of \cite[Thm. 4.3]{ahn2019fluctuations} in terms of the parameters used in this paper (see \cite[Thm. A.1]{ahn2019fluctuations} to translate from the parameters used in \cite[Thm. 4.3]{ahn2019fluctuations} to the proposition below).
\end{remark}

\begin{remark}
The proof of \Cref{thm:moment_formula} relies on machinery from symmetric function theory, where zonal spherical functions provide the link between symmetric functions and products of random matrices, see \cite{ahn2019fluctuations} for details.
\end{remark}

We want to consider the general setting where some the $L_\tau^{(n)}$ may be infinite. To avoid separating cases between $L_\tau^{(n)}$ finite and infinite, it will be convenient to shift the log squared singular values by an additive factor defined below.

\begin{definition}
For positive integers $L > n$ we let
\[
s_n(L) := \sum_{k=1}^{L-n} \frac{1}{k} - \log L = H_{L-n} - \log L
\]
and let
\[ s_n(\infty) = \lim_{L \to \infty} s_n(L) = \gamma \]
where $\gamma$ is the Euler-Mascheroni constant.
\end{definition}

\begin{corollary}\label{thm:moment_formula_infinite}
If $c_1,\ldots,c_m > 0$ are real, and $L_1^{(n)},\ldots, L_T^{(n)} \in \Z_{>n} \cup \{\infty\}$, then
\begin{align}\label{eq:moment_formula_infinite}
\begin{split}
\begin{multlined}
\E\left[ \prod_{i=1}^m \sum_{j=1}^n \left( y_j^{(n)}(T) e^{\sum_{\tau=1}^T s_n(L_\tau^{(n)})} \right)^{c_i} \right] = \frac{\prod_{i=1}^m (-c_i)^{-1}}{(2\pi\bi)^m} \oint \cdots \oint \prod_{1 \le i < j \le m} \frac{(u_j - u_i)(u_j + c_j - u_i - c_i)}{(u_j - u_i - c_i)(u_j + c_j - u_i)} \\
\times \prod_{i=1}^m \left( \prod_{\ell=1}^n \frac{u_i + \ell - 1}{u_i + c_i + \ell - 1} \cdot \prod_{\tau=1}^T \prod_{k=1}^{L_\tau^{(n)} - n} e^{c_i/k} \frac{u_i + c_i - k}{u_i - k} \right) du_i
\end{multlined}
\end{split}
\end{align}
where the $u_i$-contour $\fU_i$ is positively oriented around $\{-c_i- \ell+1\}_{\ell=1}^n$ but does not enclose $k$ for $1 \le k \le \max_{1 \le \tau \le T} L_\tau^{(n)} - n$, and is enclosed by $\fU_j - c_i, \fU_j + c_j$ for $j > i$; the result holds provided that such contours exist.
\end{corollary}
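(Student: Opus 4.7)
The plan is to reduce Corollary \ref{thm:moment_formula_infinite} to Proposition \ref{thm:moment_formula} in two stages: first, show that when all $L_\tau^{(n)}$ are finite, the new identity is a direct algebraic rearrangement of \eqref{eq:moment_formula}; second, obtain the general case by approximating each infinite $L_\tau^{(n)}$ by a large finite value and passing to the limit.

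For the finite case, I would start from \eqref{eq:moment_formula} and multiply both sides by $\prod_{i=1}^{m} e^{c_i \sum_{\tau=1}^{T} s_n(L_\tau^{(n)})}$. On the left-hand side this factor distributes into the sum-and-product to produce $\sum_j (y_j^{(n)}(T) e^{\sum_\tau s_n(L_\tau^{(n)})})^{c_i}$, matching the LHS of \eqref{eq:moment_formula_infinite}. On the right-hand side, for each pair $(i,\tau)$, the new factor combines with the existing $(L_\tau^{(n)})^{c_i}$ via
\[
(L_\tau^{(n)})^{c_i}\, e^{c_i s_n(L_\tau^{(n)})} \;=\; e^{c_i H_{L_\tau^{(n)} - n}} \;=\; \prod_{k=1}^{L_\tau^{(n)} - n} e^{c_i/k},
\]
using $s_n(L) = H_{L-n} - \log L$. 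These factors absorb into the existing product over $k$ to yield the RHS of \eqref{eq:moment_formula_infinite}.

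For the extension to the case where some $L_\tau^{(n)} = \infty$, set $L_\tau^{(n,M)} := M$ whenever $L_\tau^{(n)} = \infty$ and $L_\tau^{(n,M)} := L_\tau^{(n)}$ otherwise, apply the finite-case identity to $(L_\tau^{(n,M)})_\tau$, and let $M \to \infty$. On the RHS the contours $\fU_i$ can be chosen independently of $M$ for $M$ large, as they only need to avoid all positive integers while enclosing a fixed finite set of poles. On each compact contour the uniform estimate
\[
\frac{c_i}{k} + \log \frac{u_i + c_i - k}{u_i - k} \;=\; \frac{c_i u_i}{k(u_i - k)} + O(k^{-2}) \;=\; O(k^{-2})
\]
shows that $\prod_{k=1}^{M-n} e^{c_i/k}\, (u_i + c_i - k)/(u_i - k)$ converges absolutely and uniformly in $u_i$ to its $M=\infty$ counterpart, so dominated convergence transfers the limit through the contour integral.

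On the LHS, with $n$ fixed, the classical fact that an $n\times n$ corner $\sqrt{M}\tilde X_M$ of a Haar $U(M)$ matrix converges in distribution (indeed, entrywise in moments) to a complex Ginibre matrix yields weak convergence of the squared singular values of $X_T^{(n,M)} \cdots X_1^{(n,M)}$ to those of the limiting product. The main obstacle is upgrading this to convergence of the weighted moments $\E\!\left[\prod_i \sum_j \bigl(y_j^{(n,M)}(T)\, e^{\sum_\tau s_n(L_\tau^{(n,M)})}\bigr)^{c_i}\right]$, which requires uniform integrability in $M$. The cleanest route is bootstrapping from the already-proved finite-$L$ version of \eqref{eq:moment_formula_infinite} applied with slightly enlarged exponents $c_i' > c_i$: the resulting $M$-uniform bound on $\E[\sum_j y_j^{c_i'}\cdots]$ dominates the $c_i$-moments and delivers the required uniform integrability. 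With that in hand, the LHS converges to the claimed expression and \eqref{eq:moment_formula_infinite} follows.
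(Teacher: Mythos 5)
Your proposal is correct and follows essentially the same two-step strategy as the paper: absorb the shift factors $e^{c_i s_n(L_\tau^{(n)})}$ into the $(L_\tau^{(n)})^{c_i}$ terms via $H_{L-n}-\log L$ to deduce the finite-$L$ case from Proposition~\ref{thm:moment_formula}, then send the infinite entries to $\infty$ and pass to the limit on both sides, using uniform convergence of the integrand on compact contours for the RHS and the weak convergence of truncated-unitary corners to Ginibre for the LHS. The one place you go beyond the paper's (rather terse) argument is in explicitly addressing the uniform integrability needed to upgrade weak convergence of $y_j^{(n,M)}(T)$ to convergence of moments: the paper simply cites \cite{petz2004asymptotics} and asserts ``taking a limit of both sides,'' whereas you propose an $M$-uniform moment bound obtained by applying the already-established finite-$L$ identity with enlarged exponents $c_i' > c_i$ (which is controlled because the RHS of \eqref{eq:moment_formula_infinite} with $L$ replaced by $M$ is itself uniformly bounded as $M \to \infty$). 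That is a genuine and correct tightening of the argument; the overall approach is otherwise identical.
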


\begin{proof}
First suppose that all $L_\tau^{(n)}$ are finite. Then the result follows from \Cref{thm:moment_formula}, where we note that the shifts $s_n(L_\tau^{(n)})$ effectively replace the $(L_\tau^{(n)})^{c_i}$ factors inside the product over $\tau$ in the RHS of \eqref{eq:moment_formula} with $\prod_{k=1}^{L_\tau^{(n)} - n} e^{c_i/k}$. For the general case, we have that for $u \notin \{1,\ldots,\max_{1 \le \tau \le T} L_\tau^{(n)} - n\}$ and $c > 0$,
\[\prod_{k=1}^{L_\tau^{(n)} - n} e^{c/k} \frac{u + c - k}{u - k} \to \prod_{k=1}^{\infty} e^{c/k} \frac{u + c - k}{u - k}\]
as $L_\tau^{(n)} \to \infty$. Additionally, $X_\tau^{(n)}$ converges weakly to an $n \times n$ Ginibre matrix as $L_\tau^{(n)} \to \infty$ by \cite{petz2004asymptotics}. Hence the general case follows by taking a limit of both sides of the finite case.
\end{proof}

\begin{lemma}\label{thm:tau_product_converges}
Fix $n \in \Z_{>0}$, $\wh{c} \in \R_{>0}$, and let $(L_\tau^{(n)})_{\tau \ge 1}$ and $\rho_n$ be as in \Cref{thm:concentration}. Then
\[ \lim_{T\to\infty} \prod_{\tau=1}^T \prod_{k=1}^{L_\tau^{(n)} - n} e^{\frac{\wh{c}}{kT}} \frac{u+\frac{\wh{c}}{T}-k}{u-k} = \prod_{k=1}^\infty \exp\left( \wh{c} \,\rho_n(\llbracket k,\infty \rrbracket)\left( \frac{1}{k} + \frac{1}{u - k} \right) \right) \]
uniformly over $u$ in compact subsets of $\C \setminus \Z_{>0}$.
\end{lemma}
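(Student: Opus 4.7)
My plan is to take logarithms, expand each factor to first order in $1/T$, and then invoke the frequency-convergence hypothesis together with a dominated-convergence argument. To set up, fix a compact set $K \subset \C \setminus \Z_{>0}$ and let $\delta := \inf_{u \in K,\, k \in \Z_{>0}}|u - k| > 0$. I first observe that the logarithm of the RHS, namely
\[ \wh{c}\sum_{k=1}^\infty \rho_n(\llbracket k,\infty\rrbracket)\left(\frac{1}{k} + \frac{1}{u-k}\right), \]
converges absolutely and uniformly on $K$, since $\tfrac{1}{k} + \tfrac{1}{u-k} = \tfrac{u}{k(u-k)} = O(1/k^2)$ uniformly for $u \in K$.

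Next I would expand each factor. For $T \ge 2\wh{c}/\delta$, the identity $\log(1+z) = z + O(z^2)$ applied with $z = (\wh{c}/T)/(u-k)$ yields
\[ \log\!\left(e^{\wh{c}/(kT)}\,\frac{u + \wh{c}/T - k}{u - k}\right) = \frac{\wh{c}}{T}\!\left(\frac{1}{k} + \frac{1}{u-k}\right) + E_k(u,T), \]
with $|E_k(u,T)| \le C_K/(T^2(u-k)^2)$ uniformly for $u \in K$ and $k \ge 1$. Swapping the order of summation via
\[ \sum_{\tau=1}^T \sum_{k=1}^{L_\tau^{(n)}-n} g(k) = \sum_{k=1}^\infty g(k)\,N_k(T), \qquad N_k(T) := \#\{1 \le \tau \le T : L_\tau^{(n)} - n \ge k\}, \]
the total contribution of the $E_k$ is bounded by $T \cdot (C_K/T^2) \cdot \sum_{k \ge 1}1/(u-k)^2 = O(1/T)$ uniformly on $K$, so the logarithm of the LHS equals
\[ \wh{c}\sum_{k=1}^\infty \frac{N_k(T)}{T}\left(\frac{1}{k} + \frac{1}{u-k}\right) + O(1/T). \]

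The main obstacle is interchanging $T \to \infty$ with the sum over $k$ while preserving uniformity in $u$. I plan to handle this by a uniform dominated-convergence argument on $\Z_{>0}$: by hypothesis $N_k(T)/T \to \rho_n(\llbracket k,\infty\rrbracket)$ for each fixed $k$, the majorant $|(N_k(T)/T)(1/k + 1/(u-k))| \le C_K'/k^2$ is summable and $u$-independent, so the tail past some $M = M(\epsilon)$ is uniformly small in both $T$ and $u \in K$, while the finite head sum converges uniformly in $u$ since each of its finitely many summands does. Exponentiating yields the claimed identity with uniform convergence on $K$.
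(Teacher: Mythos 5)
Your argument is correct and follows essentially the same route as the paper's proof: both swap the double product over $(\tau,k)$ into a single product over $k$ with exponent $N_k(T) = \#\{\tau \le T: L_\tau^{(n)}-n \ge k\}$, take logarithms, extract the $O(1/T)$ leading term plus an $O(1/(k^2T^2))$ error, and then invoke the frequency-convergence hypothesis $N_k(T)/T \to \rho_n(\llbracket k,\infty\rrbracket)$. Your write-up is if anything a bit more careful than the paper's, which only sketches the $|\log(\cdot)| \asymp 1/(k^2T)$ estimate and leaves the uniform dominated-convergence interchange implicit.
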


\begin{proof}
We can write
\begin{align*}
\prod_{\tau=1}^T \prod_{k=1}^{L_\tau^{(n)} - n} e^{\frac{\wh{c}}{kT}} \frac{u+\frac{\wh{c}}{T}-k}{u-k} &= \prod_{\tau=1}^T \prod_{k=1}^{L_\tau^{(n)} - n} e^{\frac{\wh{c}}{kT}} \left( 1 - \frac{1}{T} \frac{\wh{c}}{k - u} \right) \\
&= \prod_{k=1}^\infty \left( e^{\frac{\wh{c}}{kT}} \left( 1 - \frac{1}{T} \frac{\wh{c}}{k - u} \right) \right)^{\#\{1 \le \tau \le T: L_\tau^{(n)} - n \ge k\}}.
\end{align*}
Note that the product is always convergent. Indeed, observe that
\[ \frac{C^{-1}}{k^2 T} \le \log \left( e^{\frac{\wh{c}}{kT}} \left( 1 - \frac{1}{T} \frac{\wh{c}}{k - u} \right) \right) \le \frac{C}{k^2 T} \]
for $u$ in a compact subset of $\C \setminus \Z_{>0}$, for $k$ sufficiently large and for some $C>1$ depending on this subset. Since $\#\{1 \le \tau \le T: L_\tau^{(n)} - n \ge k\} = \rho_n(\llbracket k,\infty \rrbracket) T (1 + o(1))$ the desired convergence follows. 
\end{proof}

\begin{proof}[Proof of {\Cref{thm:concentration}}]

As we will show below,
\begin{align}\label{eq:observe_that}
\lim_{T \to \infty} \frac{1}{T} \sum_{\tau=1}^T s_n(L_\tau^{(n)}) = - \log(n) + \sum_{k=1}^\infty \rho_n(\llbracket k,\infty \rrbracket) \left( \frac{1}{k} + \log\left(1 - \frac{1}{k+n} \right) \right) =: \alpha.
\end{align}
Thus, we may define
\[ \mu_T := \frac{1}{n} \sum_{i=1}^n \delta_{x_i(T)}, \quad \quad \mu := \frac{1}{n} \sum_{i=1}^n \delta_{\lambda_i(n) + \alpha} \]
where
\[ x_i(T) = \frac{1}{T} \left( \log y_i^{(n)}(T) + \sum_{\tau=1}^T s_n(L_\tau^{(n)}) \right), \quad \quad 1 \le i \le n. \]

We establish our theorem by showing that
\begin{align} \label{eq:laplace_transform}
\int e^{\wh{c}x} d\mu_T(x) \to \int e^{\wh{c}x} d\mu(x)
\end{align}
in probability as $T \to \infty$, for any $\wh{c} > 0$. In fact, it suffices to show this for $\wh{c} = 1,\ldots,n$ as this would imply that
\[ p_k(e^{x_1(T)},\ldots,e^{x_n(T)}) \to p_k(e^{\lambda_1(n) + \alpha},\ldots,e^{\lambda_n(n) + \alpha}) \]
in probability $T \to\infty$ for each $k = 1,\ldots,n$. Since the first $n$ power sums algebraically generate any symmetric polynomial in $n$ variables, we have
\[ e_k(e^{x_1(T)},\ldots,e^{x_n(T)}) \to e_k(e^{\lambda_1(n) + \alpha},\ldots,e^{\lambda_n(n) + \alpha}) \]
in probability as $T \to\infty$ for each $k = 1,\ldots,n$. As
\[ (z - r_1) \cdots (z - r_n) = z^n - e_1(r_1,\ldots,r_n)z^{n-1} + \cdots + (-1)^n e_n(r_1,\ldots,r_n) \]
and the roots of a polynomial depend continuously on its coefficients, we see that
\[ e^{x_i(T)} \to e^{\lambda_i(n)+\alpha} \]
and therefore
\[ \frac{1}{T}\log y_i^{(n)}(T) \to \lambda_i(n) \]
in probability as $T\to\infty$ for $1 \le i \le n$, as desired.

The remainder of the proof has three parts: establishing \eqref{eq:observe_that}, showing the convergence in expectation of Laplace transforms \eqref{eq:laplace_transform}, and showing the variance of the LHS is $o(1)$ to upgrade to convergence in probability. 

\vspace{5mm}
\noindent
\textbf{Part 1: Establishing \eqref{eq:observe_that}.}

Define
\[ \rho_{n,T}(k) = \frac{1}{T} \#\{L_\tau^{(n)} - n = k, 1 \le \tau \le T \}. \]
In particular, 
\[ \lim_{T \to \infty} \rho_{n,T}(\llbracket k,\infty \rrbracket) = \rho_n(\llbracket k,\infty \rrbracket). \]
Then
\begin{align*}
\frac{1}{T} \sum_{\tau=1}^T s_n(L_\tau^{(n)}) &= \sum_{k \in \Z_{> 0} \cup \{\infty\}} \rho_{n,T}(k) s_n(k + n) \\
&= \rho_{n,T}(\infty) \gamma + \sum_{k \in \Z_{\ge0}} \rho_{n,T}(k) \left( H_k - \log(k + n) \right)
\end{align*}
where the last line uses the fact that $\rho_{n,T}(0) = 0$ (recall $L_\tau^{(n)} > n$). Using summation by parts
\[ \sum_{k=0}^m f_k(g_{k+1} - g_k) = (f_mg_{m+1} - f_0 g_0) - \sum_{k=1}^m g_k(f_k - f_{k-1}), \]
we have
\begin{align*}
& \sum_{k=0}^m \rho_{n,T}(k) \left( H_k - \log(k+n) \right) \\
& \quad = (\log(m+n) - H_m) \rho_{n,T}(\llbracket m+1,\infty \rrbracket) - \log(n) \rho_{n,T}(\llbracket 0,\infty \rrbracket) \\
& \quad \quad \quad \quad + \sum_{k=1}^m \rho_{n,T}(\llbracket k,\infty \rrbracket) \left(H_k - H_{k-1} + \log\left(\frac{k+n-1}{k+n}\right) \right) \\
& \quad = (\log(m+n) - H_m) \rho_{n,T}(\llbracket m+1,\infty \rrbracket) - \log(n) + \sum_{k=1}^m \rho_{n,T}(\llbracket k,\infty \rrbracket) \left(\frac{1}{k} + \log\left(1 - \frac{1}{k+n}\right) \right).
\end{align*}
Sending $m$ to $\infty$, we obtain
\[ -\rho_{n,T}(\infty) \gamma - \log(n) + \sum_{k=1}^\infty \rho_{n,T}(\llbracket k,\infty \rrbracket) \left( \frac{1}{k} + \log\left(1 - \frac{1}{k+n}\right) \right). \]
Note that the latter series is convergent. Therefore
\[ \frac{1}{T} \sum_{\tau=1}^T s_n(L_\tau^{(n)}) = - \log(n) + \sum_{k=1}^\infty \rho_{n,T}(\llbracket k,\infty \rrbracket) \left( \frac{1}{k} + \log\left(1 - \frac{1}{k+n}\right) \right). \]
Taking $T$ to $\infty$ establishes \eqref{eq:observe_that}.

\vspace{5mm}
\noindent
\textbf{Part 2: Convergence in expectation.}

By \Cref{thm:moment_formula_infinite}, we have
\begin{align*}
\E \left[ \int e^{Tcx} d\mu_T(x) \right] &= \E\left[ \sum_{i=1}^n \left(y_i^{(n)}(T) e^{\sum_{\tau=1}^T s_n(L_\tau^{(n)})} \right)^c \right] \\ 
&= \frac{-c^{-1}}{2\pi\bi} \oint \prod_{\ell=1}^n \frac{u + \ell - 1}{u + c + \ell - 1} \cdot \prod_{\tau=1}^T \prod_{k=1}^{L_\tau^{(n)} - n} e^{c/k} \frac{u + c - k}{u - k} du \\
&= \sum_{\ell=1}^n \left( \prod_{h \ne \ell} \frac{h-\ell-c}{h-\ell} \right) \prod_{\tau=1}^T \prod_{k=1}^{L_\tau^{(n)} - n} e^{c/k} \frac{-\ell+1-k}{-c-\ell+1-k}
\end{align*}
where we have expanded the integral in its residues at $-c, -c-1,\ldots,-c-n+1$. Setting $c = \wh{c}/T$ and applying \Cref{thm:tau_product_converges} to the product over $\tau$, we obtain
\begin{align*}
\lim_{T\to\infty} \E \left[ \int e^{\wh{c}x} d\mu_T(x) \right] &= \sum_{\ell=1}^n \prod_{k=1}^\infty \exp\left( \wh{c}\,\rho_n(\llbracket k,\infty \rrbracket)\left( \frac{1}{k} - \frac{1}{k+\ell - 1} \right) \right) \\
&= \sum_{i=1}^n \exp\left( \wh{c} \sum_{k=1}^\infty \rho_n(\llbracket k,\infty \rrbracket)\left( \frac{1}{k} - \frac{1}{k+n - i} \right) \right)
\end{align*}
where in the second equality we change indices $i = n - \ell + 1$. The convergence 
\[ \lim_{T\to\infty} \E \left[ \int e^{\wh{c}x} d\mu_T(x) \right] = \sum_{i=1}^n e^{\wh{c}(\lambda_i(n) + \alpha)} \]
follows from the fact that
\[ \lambda_i(n) + \alpha = \sum_{k=1}^\infty \rho_n(\llbracket k,\infty \rrbracket)\left( \frac{1}{k} - \frac{1}{k+n - i} \right) \]
where we recall the definition of $\lambda_i(n)$ in \eqref{eq:main_conv_in_prob}.

\vspace{5mm}
\noindent
\textbf{Part 3: Vanishing variance.} 

By \Cref{thm:moment_formula_infinite} we have
\begin{align*}
    & \var\left( \sum_{i=1}^n \left(y_i^{(n)}(T)e^{\sum_{\tau=1}^T s_n(L_\tau^{(n)})}\right)^c \right)\\
    &= \E\left[\left( \sum_{i=1}^n \left(y_i^{(n)}(T)e^{\sum_{\tau=1}^T s_n(L_\tau^{(n)})}\right)^c \right)^2\right] - \E\left[ \sum_{i=1}^n \left(y_i^{(n)}(T)e^{\sum_{\tau=1}^T s_n(L_\tau^{(n)})}\right)^c\right]^2 \\
& =   \frac{c^{-2}}{(2\pi\bi)^2} \oint \oint \left( \frac{(u_2 - u_1)^2}{(u_2 - u_1 - c)(u_2 - u_1 + c)} - 1 \right) \prod_{i=1}^2 \left( \prod_{\ell=1}^n \frac{u_i + \ell - 1}{u_i + c + \ell - 1} \cdot \prod_{\tau=1}^T \prod_{k=1}^{L_\tau^{(n)} - n} e^{c/k} \frac{u_i + c - k}{u_i - k} \right) du_i \\
& = \frac{1}{(2\pi\bi)^2} \oint \oint \frac{1}{(u_2 - u_1)^2 - c^2} \prod_{i=1}^2 \left( \prod_{\ell=1}^n \frac{u_i + \ell - 1}{u_i + c + \ell - 1} \cdot \prod_{\tau=1}^T \prod_{k=1}^{L_\tau^{(n)} - n} e^{c/k} \frac{u_i + c - k}{u_i - k} \right) du_i.\\
\end{align*}
Since the integration is over closed contours which do not depend on $T$ (at least for $T$ large enough), letting $c=\wh{c}/T$ we have by \Cref{thm:tau_product_converges} that the above converges as $T \to \infty$ to 
\[
\frac{1}{(2\pi\bi)^2} \oint \oint \frac{g(u_1) g(u_2)}{(u_2 - u_1)^2} \, du_1 \, du_2
\]
where $g$ is analytic on $\C \setminus \Z_{>0}$. Since the contours were chosen so that the $u_2$ contour encloses the $u_1$ contour and does not enclose any $k \in \Z_{>0}$, the integral with respect to $u_1$ vanishes. Therefore
\[
\lim_{T \to \infty} \var\left( \sum_{i=1}^n \left(y_i^{(n)}(T)e^{\sum_{\tau=1}^T s_n(L_\tau^{(n)})}\right)^{\wh{c}/T} \right) = 0.
\]
The convergence in probability of Laplace transforms now follows from convergence in expectation and Chebyshev's inequality by the standard argument.

\end{proof}

\begin{proof}[Proof of \Cref{thm:lyapunov}]
By the definition \eqref{eq:main_conv_in_prob} of $\lambda_i(n)$, %and \eqref{eq:lambda+alpha}, 
\begin{align} \label{eq:lambda_diff}
\begin{split}
\lambda_i(n) - \lambda_1(n) &= \sum_{k = 1}^\infty \rho_n(\llbracket k,\infty \rrbracket)\left(\frac{1}{k-n-1} - \frac{1}{k-n-i}\right) \\
&= -(i-1) c(n) + \e_i(n)
\end{split}
\end{align}
where $c(n)$ is as defined in the theorem statement, and
\[ \e_i(n) := -(i-1)^2 \sum_{k=1}^\infty \frac{\rho_n(\llbracket k,\infty \rrbracket)}{(k+n-1)^2(k+n-i)}. \]
Notice that
\[ |\e_i(n)| \le \frac{i-1}{n-i+1} (i-1) \sum_{k=1}^\infty \frac{\rho_n(\llbracket k,\infty \rrbracket)}{(k+n-1)^2} = \frac{(i-1)^2}{n-i+1} c(n). \]
Thus for $m(n) = o(\sqrt{n})$ it is clear that 
\[
\sup_{1 \leq i \leq m(n)} |\e_i(n)| = |\e_{m(n)}(n)| = c(n) o(1),
\]
completing the proof.
\end{proof}

\bibliographystyle{alpha}
\bibliography{references}

\end{document}